
\documentclass[12pt]{article}

\usepackage[utf8]{inputenc}

\usepackage[a4paper,nomarginpar]{geometry}
\geometry{
 lmargin=25mm,
 rmargin=25mm,
 tmargin=25mm,
 bmargin=25mm}

\usepackage[english]{babel}

\usepackage{graphicx}
\usepackage{amsmath,amsfonts,amssymb,amsthm}
\usepackage{amscd}
\usepackage{xcolor}

\theoremstyle{definition}
\newtheorem{theorem}{Theorem}

\newtheorem{corollary}[theorem]{Corollary}

\theoremstyle{definition}

\newtheorem{example}[theorem]{Example}

\theoremstyle{remark}
\newtheorem{remark}[theorem]{Remark}


\newcommand{\N}{\mathbb{N}} 
\newcommand{\Z}{\mathbb{Z}} 
\newcommand{\R}{\mathbb{R}} 

\newcommand{\abs}[1]{\left\lvert#1\right\rvert}

\newcommand{\norm}[1]{\lVert#1\rVert}

\newcommand{\spa}{\operatorname{span}}

\newcommand{\eps}{\varepsilon}
\newcommand{\Tt}{\ensuremath{(T_t)_{t\geq 0}}}

\begin{document}

\title{Strong mixing measures for $C_0$-semigroups
 \thanks{This work is supported in part by MICINN and FEDER, Project
MTM2010-14909. The first author was also supported by a grant from the FPU Program of MEC.}}


\author{M. Murillo-Arcila\footnote{IUMPA, Universitat Polit\`{e}cnica de Val\`{e}ncia, Edifici 8G, Cam\'{\i} Vera S/N, 46022 Val\`{e}ncia, Spain.  e-mail: {mamuar1@posgrado.upv.es}} and  A. Peris\footnote{IUMPA, Universitat Polit\`{e}cnica de Val\`{e}ncia, Departament de Matem\`{a}tica Aplicada, Edifici 7A, 46022 Val\`{e}ncia, Spain. e-mail: aperis@mat.upv.es}}

\date{ }

\maketitle

\begin{abstract}
Our purpose is to obtain a very effective and general method to prove that certain $C_0$-semigroups admit invariant strongly mixing measures.
More precisely, we show that the Frequent Hypercyclicity  Criterion  for $C_0$-semigroups ensures the existence of invariant mixing measures with full support.
We will several examples, that range from birth-and-death models to the Black-Scholes equation, which illustrate these results.

\end{abstract}

\section{Introduction}

The interest in the dynamics of $C_0$-semigroups of operators comes from the analysis of the asymptotic behaviour of solutions to certain linear partial differential equations and to infinite systems of linear differential equations. Especially, the chaotic behaviour (in the topological and in the measure-theoretic sense) of $C_0$-semigroups has experimented a great development in recent years (see, e.g., \cite{albanese_et_alt,banasiak_moszynski2011,barrachina_peris,bayart_SF_11,bermudez_et_alt,conejero_mangino,conejero_peris_dcds_09,ji-weber,kalmes,Rdn12invariantmeasures}).

We recall that $(T_t)_{t\geq 0}$ is a  \emph{$C_0$-semigroup} if  $T_0=I$, $T_{t+s}=T_t\circ T_s$ and $\lim_{s\rightarrow t}T_sx=T_tx$ for all $x\in X$ and $t\geq 0$.
If $X$ is a separable infinite-dimensional Banach space, a \emph{$C_0$-semigroup} $(T_t)_{t\geq 0}$ of linear and continuous operators on $X$ is said to be \emph{hypercyclic} if there exists $x\in X$ such that the set $\{T_tx:t\geq 0\}$ is dense in $X$. An element $x\in X$ is  a \emph{periodic point} for the semigroup if there exists $t>0$ such that $T_tx=x$. A semigroup $(T_t)_{t\geq 0}$ is called \emph{chaotic} if it is hypercyclic and the set of periodic points is dense in $X$.

There are analogous properties related to $C_0$-semigroups defined on a probability space $(X,\mathfrak{B},\mu)$, where $X$ is a Banach space  and $\mathfrak{B}$ denotes the algebra of Borel subsets of $X$. We will say that a Borel probability measure $\mu$ has \emph{full support} if for any non-empty open set $U\subset X$ we have $\mu(U)>0$. A $C_0$-semigroup is \emph{mixing} if
$$
\lim_{t\rightarrow\infty}\mu(A\cap T_{t}^{-1}(B))=\mu(A)\mu(B)\qquad (A,B\in\mathfrak{B}).
$$
Mixing implies \emph{ergodicity}, i.e., the invariance $T^{-1}(A)=A$ for certain $A \in \mathfrak{B}$ implies, either $\mu (A)=0$, or $\mu (A)=1$.

The concept of frequent  hypercyclicity was introduced by Bayart and Grivaux \cite{BaGr06} inspired by Birkhoff's ergodic theorem. The first ones that used ergodic theory for the dynamics of linear operators were  Rudnicki \cite{Rdn93} and Flytzanis \cite{Fly95}. The notion of frequent hypercyclicity was extended to $C_0$-semigroups in \cite{BaGri07}. We recall that the lower density of a measurable  set $M\subset \mathbb{R_+}$ is defined by
$$
\underline{Dens}(M):=\liminf_{N\rightarrow \infty}\frac{\mu(M\bigcap[0,N])}{N},
$$
where $\mu$ is the Lebesgue measure on $\mathbb{R_+}$. A $C_0$-semigroup $(T_t)_{t\geq 0}$ is said to be  \emph{frequent hypercyclic} if there exists $x\in X$  such that $\underline{Dens}(\{t\in\mathbb{R_+} \ ; \ T_tx\in U\})>0$ for any non-empty open set  $U\subset X$.

The first version of a Frequent Hypercyclicity Criterion for operators was obtained by Bayart and Grivaux \cite{BaGr06}. Later, Bonilla and Grosse-Erdmann \cite{bonilla_grosse-erdmann2007frequently}  gave a more general formulation for operators on separable $F$-spaces. Another (probabilistic) version of it was provided by Grivaux \cite{Gri06}.

In \cite{mangino_peris2011frequently}, Mangino and Peris obtained a continuous version  of the frequent hypercyclicity criterion based on the Pettis integral. This criterion can be verified in certain cases in terms of the infinitesimal generator of the semigroup. They also gave applications for $C_0$-semigroups generated by Ornstein-Uhlenbeck operators, and  for translation semigroups on weighted spaces of $p$-integrable (or continuous) functions. Their main result  was the following sufficient condition for frequent hypercyclicity.

\begin{theorem}[\cite{mangino_peris2011frequently}]\label{fhcs}
Let $(T_t)_t$ be a $C_0$-semigroup on a separable Banach space $X$. If there exist, $X_0\subset X$ dense in $X$, and maps $S_t:X_0\rightarrow X_0$, $t>0$, such that
\begin{itemize}
\item[(i)] $T_tS_tx=x,T_tS_rx=S_{r-t}x, t>0, r>t>0$,
\item[(ii)] $t\rightarrow T_tx$ is Pettis integrable in $[0,\infty)$ for all $x\in X_0$,
\item[(iii)] $t\rightarrow S_tx$ is Pettis integrable in $[0,\infty)$ for all $x\in X_0$.
\end{itemize}
then  $(T_t)_{t\geq 0}$  is frequently hypercyclic.
\end{theorem}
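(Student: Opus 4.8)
The plan is to transfer the discrete Frequent Hypercyclicity Criterion of Bonilla and Grosse-Erdmann to the one-parameter setting, using hypotheses (ii) and (iii) to supply the norm-unconditionally convergent series that the discrete statement needs. The device is to replace pointwise values by unit-block integrals: for $x\in X_0$ put $u_n:=\int_n^{n+1}T_tx\,dt$ and $v_n:=\int_n^{n+1}S_tx\,dt$; because $\int_0^\infty T_tx\,dt$ and $\int_0^\infty S_tx\,dt$ exist as Pettis integrals, every subseries of $\sum_nu_n$ and of $\sum_nv_n$ converges weakly, hence (Orlicz--Pettis) in norm, so these two series converge unconditionally in $X$. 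The relations in (i) translate into $T_1u_n=u_{n+1}$, $T_1v_n=v_{n-1}$ $(n\ge 1)$ and $T_1v_0=u_0$, i.e. $T_1$ acts like a bilateral shift on these blocks. Then I would reduce to a single vector: fix $(y_k)_k$ dense in $X_0$, enumerate the pairs $(y_k,1/j)$ as $(w_m,\eps_m)_m$, and observe that it is enough to build $z\in X$ together with measurable sets $A_m\subseteq[0,\infty)$ of positive lower density such that $t\in A_m$ implies $\|T_tz-w_m\|<\eps_m$, since then every ball meets the orbit of $z$ along a set of positive lower density.

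Two further ingredients are needed. First, a combinatorial partition lemma (a continuous counterpart of the one of Bonilla--Grosse-Erdmann): pairwise disjoint sets $A_m\subseteq[0,\infty)$, each a union of intervals of one and the same positive length, each of positive lower density, with the gaps between these intervals growing with the block indices involved. Second, tail control of the Pettis integrals: for $x\in X_0$ the set functions $E\mapsto\int_ET_tx\,dt$ and $E\mapsto\int_ES_tx\,dt$ are countably additive $X$-valued measures, absolutely continuous with respect to Lebesgue measure, so $\bigl\|\int_ET_tx\,dt\bigr\|$ and $\bigl\|\int_ES_tx\,dt\bigr\|$ are uniformly small when $E$ has small measure or lies far out. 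One then defines $z$ by distributing over the intervals of each $A_m$ the backward data attached to $w_m$, makes the defining sum norm-convergent using the growth of the gaps and the tail bounds, and for $t$ in a positive-density subset of $A_m$ expands $T_tz$ through (i): the ``diagonal'' term, coming from the interval of $A_m$ that contains $t$, is within $\eps_m/2$ of $w_m$ by strong continuity of $\Tt$ and the absolute-continuity estimates, while the ``off-diagonal'' remainder is below $\eps_m/2$ by the growth of the gaps. (Alternatively, and in the spirit of the present paper, one may avoid the explicit vector: from (i)--(iii) construct a $(T_t)$-invariant, strongly mixing Borel probability measure with full support, realized as the law of a random series in the block integrals with independent full-support coefficients, and invoke Birkhoff's pointwise ergodic theorem along a countable base of non-empty open sets.)

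The crux, in either approach, is the passage from the merely weak (Pettis) integrability in (ii)--(iii) to genuine norm estimates: ensuring that the diagonal contribution to $T_tz$ really tends to $w_m$ and not to some orbit average --- here one should argue through the vector measures rather than through pointwise values of $S_tx$, which need not depend continuously on $t$ --- or, in the measure-theoretic route, proving almost sure convergence of the random series. A secondary difficulty is purely one of bookkeeping: matching the continuous (Lebesgue) lower density demanded in the conclusion against the discrete-type separation required by the tail estimates, which is why one insists on building intervals of a single fixed length throughout.
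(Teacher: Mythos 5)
You should first note that the paper does not actually prove Theorem~\ref{fhcs}: it is imported verbatim from \cite{mangino_peris2011frequently}, and what the paper proves is the stronger Theorem~\ref{main} under the same hypotheses, by a genuinely different, measure-theoretic route --- a model space $Z$ of piecewise-linear continuous functions carrying a translation-invariant strongly mixing measure $\overline{\mu}$ with full support, and a factor map $\Phi$ satisfying $\Phi\circ R_t=T_t\circ\Phi$ whose push-forward measure is the desired invariant mixing measure; frequent hypercyclicity of almost every vector then follows from Birkhoff's ergodic theorem. Your primary route (unit-block integrals $u_n$, $v_n$, unconditional convergence via Orlicz--Pettis, a continuous partition lemma, and a direct construction of the frequently hypercyclic vector) is instead essentially the original Mangino--Peris argument, i.e.\ a continuous adaptation of the Bonilla--Grosse-Erdmann proof \cite{bonilla_grosse-erdmann2007frequently}. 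Your block identities $T_1u_n=u_{n+1}$, $T_1v_n=v_{n-1}$, $T_1v_0=u_0$ are correct consequences of (i), and the unconditional convergence is exactly the countable additivity of the indefinite Pettis integral recalled in the paper's preliminaries. The trade-off is clear: your route yields only frequent hypercyclicity, while the paper's construction yields the invariant mixing measure (hence frequent hypercyclicity of $\mu$-a.e.\ vector) at the cost of building $(Z,\overline{\mu})$ and $\Phi$ explicitly. Your parenthetical ``alternative'' is precisely the paper's approach, but you leave it entirely unexecuted, so it cannot carry the proof.

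The one genuine gap --- which you flag but do not close --- is the ``orbit average'' problem. Because only integrated quantities are norm-controllable under the Pettis hypotheses (pointwise values of $S_tx$ are not usable), the diagonal term of $T_tz$ for $t$ in an interval of $A_m$ is not $w_m$ but a vector of the form $\int_0^1T_sw_m\,ds$ (up to a small translation of the interval of integration), and strong continuity does not make this close to $w_m$: that would require integrating over $[0,\eps]$ and renormalizing, which is incompatible with your insistence on intervals of a single fixed length. Hence your construction only shows that the orbit of $z$ frequently visits every neighbourhood of every element of $D:=\{\int_0^1T_sx\,ds:\ x\in X_0\}$, and the proof is complete only once $D$ is shown to be dense in $X$. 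This is exactly the auxiliary lemma from \cite{mangino_peris2011frequently} that the present paper quotes in Step~3 of the proof of Theorem~\ref{main} (``the set $\{u_k\}$ is dense in $X$''), and it needs an argument (e.g.\ using the maps $S_t$ and condition (i)); without it the target set of your construction could a priori fail to be dense. Two smaller omissions: the continuous partition lemma is asserted rather than proved, and if you go through the discrete criterion for $T_1$ you must define the right inverses on $D$ itself --- e.g.\ $S_n\bigl(\int_0^1T_sx\,ds\bigr):=\int_{n-1}^{n}S_sx\,ds$, which gives $T_1^nS_ny=y$ and $T_1^mS_ny=S_{n-m}y$ --- and then still pass from frequent hypercyclicity of $T_1$ to that of the semigroup with respect to Lebesgue lower density.
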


Our purpose is to show that this criterion suffices for the existence of invariant Borel probability measures on $X$ that are strongly mixing and have full support. We also refer the reader to the recent paper of Bayart and Matheron \cite{bayart_matheron0000mixing}, that offers very general conditions which ensure the existence of mixing measures. 

This is a continuous version of a result that we obtained for single operators \cite{murillo_peris_jmaa_13}. More precisely, under the hypothesis of Bonilla and Grosse-Erdmann \cite{bonilla_grosse-erdmann2007frequently},  the authors derived a stronger result by showing that a   $T$-invariant strongly mixing measure with full support can be obtained.
 \begin{theorem}[\cite{murillo_peris_jmaa_13}]\label{fhc}
Let $T$ be an operator on a separable F-space $X$. If there are, a dense subset $X_0$ of $X$, and a sequence of maps $S_n:X_0\rightarrow X$ such that, for each $x\in X_0$,
\begin{itemize}
\item[(i)]$\sum_{n=0}^\infty T^nx$ converges unconditionally
\item[(ii)]$\sum_{n=0}^\infty S_nx$ converges unconditionally, and
\item[(iii)]$T^nS_nx=x$ and $T^mS_nx=S_{n-m}x$ if $n>m$.
\end{itemize}
then there is a $T$-invariant strongly mixing Borel probability measure $\mu$ on $X$ with full support.
\end{theorem}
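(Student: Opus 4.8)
\medskip\noindent\emph{Proof strategy.}
The plan is to produce $\mu$ as the distribution of an $X$-valued random variable built from the bilateral orbits furnished by the hypotheses, arranged so that the resulting measure is a factor of a Bernoulli shift. For $u\in X_0$ put $y_0(u):=u$, $y_n(u):=T^nu$ and $y_{-n}(u):=S_nu$ for $n\ge 1$. Condition (iii) then amounts to the cocycle identity $Ty_n(u)=y_{n+1}(u)$ for all $n\in\Z$, while (i)--(ii) say exactly that the bilateral series $\sum_{n\in\Z}y_n(u)$ converges unconditionally in $X$. Fix a sequence $(u_j)_{j\ge1}$ in $X_0$ that is dense in $X$. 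On $\Omega:=\prod_{n\in\Z}S$, where $S:=\prod_{j\ge1}[-1,1]$ carries a product $\nu=\bigotimes_j\mu_j$ of atomless probability measures with full support, put $\mathbb P:=\nu^{\otimes\Z}$ and let $\sigma$ be the shift; then $(\Omega,\mathbb P,\sigma)$ is a Bernoulli shift, hence strongly mixing. Writing $\omega=(\omega_{n,j})_{n\in\Z,\,j\ge1}$, define
\[
\Phi(\omega):=\sum_{j\ge1}\ \sum_{n\in\Z}a_j\,\omega_{n,j}\,y_{-n}(u_j)
\]
for positive weights $a_j$ chosen below, and set $\mu:=\Phi_\ast\mathbb P$.

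Assuming $\Phi$ is defined $\mathbb P$-a.e., the verifications go as follows. Applying $T$ termwise and reindexing via $Ty_{-n}(u_j)=y_{-n+1}(u_j)$ yields $T\circ\Phi=\Phi\circ\sigma$ a.e.; hence $T_\ast\mu=(T\circ\Phi)_\ast\mathbb P=(\Phi\circ\sigma)_\ast\mathbb P=\Phi_\ast(\sigma_\ast\mathbb P)=\mu$, so $\mu$ is $T$-invariant. The same identity gives $\Phi^{-1}(T^{-N}B)=\sigma^{-N}\Phi^{-1}(B)$ up to a null set, whence for Borel $A,B$
\[
\mu\bigl(A\cap T^{-N}B\bigr)=\mathbb P\bigl(\Phi^{-1}A\cap\sigma^{-N}\Phi^{-1}B\bigr)\xrightarrow{\ N\to\infty\ }\mathbb P(\Phi^{-1}A)\,\mathbb P(\Phi^{-1}B)=\mu(A)\mu(B),
\]
because strong mixing of $(\Omega,\mathbb P,\sigma)$ is inherited by the factor $\mu$. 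For full support, recall that for independent summands the support of the sum contains the closure of the set of convergent sums of points drawn from the individual supports; since the $j$-th summand has $a_ju_j$ in its support (take $\omega_{0,j}=1$ and $\omega_{n,j}=0$ otherwise), $\operatorname{supp}\mu$ contains the closure of $\bigl\{\sum_j a_jt_ju_j:\ t_j\in[-1,1],\ \text{the series converges}\bigr\}$. Since every nonempty open set of $X$ contains $u_j$ for infinitely many $j$, this set is dense in $X$ provided the $a_j$ are chosen (and distributed among the repeated directions) so that $\sum_j a_j=\infty$: one reaches a prescribed vector by superposing many $u_j$ pointing in almost the right direction with small coefficients $a_jt_j$ that add up to the required magnitude.

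The decisive point — and the one I expect to carry the real work — is to make the double series defining $\Phi$ converge almost surely \emph{while keeping} $\sum_j a_j=\infty$. Crude estimates are self-defeating: if one merely secured $\sum_j a_j\,\mathbb E\bigl\|\sum_n\omega_{n,j}\,y_{-n}(u_j)\bigr\|<\infty$, then $\operatorname{supp}\mu$ would be bounded and full support would fail. Convergence has to be extracted from the cancellation in the random coefficients. For each fixed $j$, unconditional convergence of $\sum_n y_n(u_j)$ together with the contraction principle already gives that $Z_j:=\sum_n\omega_{n,j}\,y_{-n}(u_j)$ converges a.s., with finite moments; the subtle part is then to choose the $a_j$ — in a general Banach (or $F$-) space invoking the It\^{o}--Nisio theorem to reduce a.s.\ convergence of the outer series $\sum_j a_jZ_j$ of independent summands to convergence in probability, or a square-function estimate when $X$ has nontrivial type — so that $\sum_j a_jZ_j$ still converges while $\sum_j a_j$ diverges, the slack being of the same nature as $\sum(\pm1/j)$ against $\sum 1/j$. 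Granting this balance, everything above is routine, and the $F$-space statement follows from the same construction with the norm replaced by a translation-invariant metric.
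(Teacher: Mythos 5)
Your construction is a random-series (Gaussian-type) one, and the step you explicitly defer --- choosing the weights $a_j$ so that $\sum_j a_j Z_j$ converges almost surely while enough of the mass $\sum_j a_j$ diverges ``near every direction'' to give full support --- is not a technicality; it is the whole theorem, and in the stated generality it cannot be settled by the tools you name. The It\^o--Nisio theorem only reduces a.s.\ convergence to convergence in probability and supplies no usable sufficient condition without geometric assumptions on $X$; square-function estimates require nontrivial type, whereas the theorem is asserted for arbitrary separable $F$-spaces, where there is not even a norm and the hypotheses give unconditional convergence of each orbit series \emph{separately}, with no uniformity in $j$. The tension is quantitative: with coefficients in $[-1,1]$ the support of $a_jZ_j$ lies in a ball of radius $a_jM_j$, where $M_j=\sup_{t\in[-1,1]^{\Z}}\norm{\sum_n t_n y_{-n}(u_j)}<\infty$ by unconditional convergence, so density of $\operatorname{supp}\mu$ forces $\sum_{j\in J}a_j=\infty$ along index sets $J$ clustering at every point of $X$; on the other hand, in spaces without nontrivial type (e.g.\ $\ell^1$) a.s.\ convergence of a series of independent symmetric summands can be essentially as restrictive as absolute convergence. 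Nothing in hypotheses (i)--(iii) excludes this, so as written your scheme may produce a measure with bounded (hence not full) support, or no convergent $\Phi$ at all. This is precisely the known limitation of random-linear-combination constructions of invariant measures, which is why that route (cf.\ Bayart--Matheron) needs extra structural assumptions on the space or on the eigenvector fields.

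The actual proof avoids randomizing coefficients altogether. The model is a Bernoulli shift over the countable alphabet $\N$: a point is a two-sided index sequence $(n_k)_{k\in\Z}$, and $\Phi$ is the \emph{deterministic} sum of whole orbit segments, up to indexing of the form $\sum_{k\leq 0}T^{-k}x_{n_k}+\sum_{k>0}S_{k}x_{n_k}$, so that $T\circ\Phi=\Phi\circ\sigma$; each slot contributes the full vector $x_{n_k}$, not a small multiple of it. Convergence is obtained not from cancellation but from a growth restriction: one fixes an increasing sequence $(N_m)$ so that the tails of the orbit series of $x_1,\dots,x_{2m}$ beyond time $N_m$ are uniformly small (this is where unconditional convergence is used quantitatively), restricts to the shift-invariant set $Z$ of sequences with $n_k\leq m$ for $N_m\leq |k|<N_{m+1}$, and tunes the product measure via $\prod_j\beta_j>0$ so that $Z$ has positive, hence full, measure. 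Full support is then automatic: the cylinder prescribing $n_0=n$ and small symbols nearby has positive probability, and the tail of the series is uniformly small on it. To close your gap you would have to replace the random-coefficient ansatz by a conditioned symbolic construction of this kind (or add type/radonifying hypotheses that the theorem does not assume).
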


 In contrast with the chaotic behaviour in the topological sense, which is trivial to pass from the discrete to the continuous case, while difficult to go in the other direction (see, e.g., \cite{conejero_muller_peris} for hypercyclicity and frequent hypercyclicity), the measure-theoretic properties are not trivially passed from the discrete to the continuous case, especially because of the requirement of $T_t$-invariance for every $t>0$. This is why we need to construct explicitly the mixing measures for $C_0$-semigroups, and they cannot be obtained from the main result in  \cite{murillo_peris_jmaa_13}.


Our notation is standard, and refer to the recent books \cite{bayart_matheron2009dynamics} and \cite{grosse-erdmann_peris2011linear} for the basic theory
on chaotic linear dynamics.

We also recall the main definitions and results about Pettis integrability that will be needed in the paper.
The proofs of all these results  can be found in \cite{DU} for the case of
finite measure space, but they easily extend to $\sigma$-finite measure spaces.
Let $X$ be a Banach space and $(\Omega,\mu)$  a $\sigma$-finite measure space. A function
$f:\Omega\rightarrow X$ is said to be \emph{weakly $\mu$-measurable} if the scalar function $\varphi\circ f$
is $\mu$-measurable for every $\varphi\in X'$, where  $X'$  denotes the topological dual of $X$;
$f$ is said to be \emph{$\mu$-measurable} if there exists a sequence $(f_n)_n$ of simple functions
 such that $\lim_{n\to\infty}\abs{f_n-f}=0$ $\mu$-a.e.

Dunford's lemma says that, if $f$ is weakly $\mu$-measurable and $\varphi\circ f\in L_1(\Omega,\mu)$
for every $\varphi\in X'$, then for every measurable $E\subseteq \Omega$ there exists $x_E\in X''$ such that
\[
x_E(\varphi)=\int_E \varphi\circ f d\mu,
\]
for every $\varphi\in X'$.
When $f:\Omega \rightarrow X$ is weakly $\mu$-measurable and
 $\varphi\circ f\in L_1(\Omega,\mu)$ for every $\varphi\in X'$, then $f$ is called \emph{Dunford
integrable}. The Dunford integral of $f$ over a measurable $E\subseteq \Omega$ is defined by
the element $x_E\in X''$ such that
\[
x_E(\varphi)=\int_E \varphi\circ f d\mu,
\]
for every $\varphi\in X'$.

In the case that $x_E\in X$ for every measurable $E$, then $f$ is called \emph{Pettis integrable} and
$x_E$ is called the Pettis integral of $f$ over $E$ and will be denoted by
$(P)-\int_E f d\mu$.
  Clearly the Dunford and Pettis integrals coincide if $X$ is a reflexive space.
Moreover, if $\norm{f}$ is integrable on $\Omega$ (i.e. $f$ is \emph{Bochner integrable} on $\Omega$),
then $f$ is Pettis integrable on $\Omega$. A basic result of Pettis says that, if $f$ is Pettis integrable, then for every sequence
$(E_n)_n$ of disjoint measurable sets in $\Omega$
\[
\int_{\bigcup_{n\in\N} E_n}fd\mu =\sum_{n\in\N} \int_{E_n} fd\mu,
\]
where the series converges unconditionally. As a consequence,
if $f:[0,+\infty[\rightarrow X$ is   Pettis integrable on $[0,+\infty[$, then
for every $\varepsilon>0$ there exists $N>0$ such that for every compact set $K\subset [N,+\infty[$
\[
\norm{\int_K f(t)dt}<\varepsilon.
\]

\section{Invariant measures and the frequent hypercyclicity criterion}

We are now ready to present our main result. The idea behind the proof is to construct, given a $C_0$-semigroup $(T_t)_{t>0}$ on a separable Banach space $X$ satisfying the hypothesis of Theorem~\ref{fhcs},
\begin{enumerate}
\item a ``model'' probability space $(\textit{Z},\overline{\mu})$ and
\item a Borel measurable map $\Phi:\textit{Z}\rightarrow X$ with dense range,
\end{enumerate}
where
\begin{itemize}
\item $\textit{Z}\subset C(\mathbb{R})$ is  a $(R_t)_{t\in \R}$-invariant subset of the space $C(\R )$ of continuous functions on the real line,
\item $(R_t)_{t\in \R}$ is the translation  group,
\item $\overline{\mu}$ is a $(R_t)_{t\in \R}$-invariant strong mixing measure with full support, and
\item $\Phi R_t=T_t\Phi$ for all $t\geq 0$.
\end{itemize}
As a consequence, the Borel probability measure $\mu$ on $X$ defined by $\mu(A)=\overline{\mu}(\Phi^{-1}(A))$, $A\in\mathfrak{B}(X)$, is $(T_t)_{t>0}$-invariant, strongly mixing, and has full support.

\begin{theorem}\label{main}
Let $(T_t)_t$ be a $C_0$-semigroup on a separable Banach space $X$. If there exist, $X_0\subset X$ dense in $X$, and maps $S_t:X_0\rightarrow X_0$, $t>0$, such that :
\begin{itemize}
\item[(i)] $T_tS_tx=x,T_tS_rx=S_{r-t}x, t>0, r>t>0$,
\item[(ii)] $t\rightarrow T_tx$ is Pettis integrable in $[0,\infty)$ for all $x\in X_0$,
\item[(iii)] $t\rightarrow S_tx$ is Pettis integrable in $[0,\infty)$ for all $x\in X_0$.
\end{itemize}
then there is a $(T_t)_t$-invariant strongly mixing Borel probability measure $\mu$ on $X$ with full support.
\end{theorem}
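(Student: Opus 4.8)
The plan is to realise the strategy announced just before the statement: first build a ``model'' measure preserving system $(Z,\overline\mu,(R_t)_{t\in\R})$ together with a factor map $\Phi$, and then transport $\overline\mu$ to $X$. I will first record why this reduction is enough. Suppose we have a Borel set $Z\subset C(\R)$ invariant under the translation group $(R_t)_{t\in\R}$, a $(R_t)$-invariant strongly mixing Borel probability measure $\overline\mu$ on $Z$ with full support, and a continuous map $\Phi:Z\to X$ with dense range such that $\Phi R_t=T_t\Phi$ for every $t\ge 0$. Set $\mu(A):=\overline\mu(\Phi^{-1}(A))$. From $\Phi R_t=T_t\Phi$ we get $\Phi^{-1}(T_t^{-1}A)=(\Phi R_t)^{-1}(A)=R_t^{-1}\Phi^{-1}(A)$, hence $\mu(T_t^{-1}A)=\overline\mu(R_t^{-1}\Phi^{-1}(A))=\mu(A)$, so $\mu$ is $(T_t)$-invariant; the same identity gives $\mu(A\cap T_t^{-1}B)=\overline\mu\big(\Phi^{-1}(A)\cap R_t^{-1}\Phi^{-1}(B)\big)\to\mu(A)\mu(B)$ as $t\to\infty$ by strong mixing of $\overline\mu$; and for $U\subset X$ open non-empty, $\Phi^{-1}(U)$ is open (continuity) and non-empty (dense range), so $\mu(U)=\overline\mu(\Phi^{-1}(U))>0$ by full support of $\overline\mu$. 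Thus everything reduces to the construction of $Z$, $\overline\mu$ and $\Phi$.

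\emph{The mechanism.} Put $S_0:=I$ on $X_0$ and, for $x\in X_0$, extend the orbit to the real line by $\mathcal O_s(x):=T_sx$ for $s\ge 0$ and $\mathcal O_s(x):=S_{-s}x$ for $s<0$. Then hypothesis (i) becomes the cocycle identity $T_t\mathcal O_s(x)=\mathcal O_{s+t}(x)$ for all $t\ge 0$ and $s\in\R$ (check the cases $s\ge 0$, $s+t\le 0$, and $s<0<s+t$). By (ii)--(iii) the map $s\mapsto\mathcal O_s(x)$ is Pettis integrable on $\R$ for each $x\in X_0$, and the last remark of Section~1, applied on each half-line, gives for every $\eps>0$ an $N>0$ with $\norm{\int_K\mathcal O_s(x)\,ds}<\eps$ for every compact $K\subset\R\setminus[-N,N]$. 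The engine of the whole argument is that $T_t$ commutes with the Pettis integral (it is linear and continuous), so that translating a weight is converted into applying $T_t$: $T_t\!\left(\int_{\R}\phi(s)\,\mathcal O_s(x)\,ds\right)=\int_{\R}\phi(s-t)\,\mathcal O_s(x)\,ds$.

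\emph{The model and the factor map.} Fix a dense sequence $(u_j)_j$ in $X_0$. For $\overline\mu$ I would take the law, realised inside $C(\R)$, of a translation-invariant, strongly mixing random point field on $\R$ carrying marks in $\N$ --- for instance a marked Poisson point process on $\R\times\N$, or a stationary field made independent over disjoint time windows; such laws are $(R_t)$-invariant and strongly mixing thanks to their independence structure, and have full support as soon as every mark is charged with positive probability. Writing $\omega=\{(t_i,j_i)\}$ for such a configuration, set $\Phi(\omega):=\sum_i c_{j_i}\,\mathcal O_{-t_i}(u_{j_i})$ for suitably small fixed weights $c_j>0$. Then $R_t\omega=\{(t_i-t,j_i)\}$ and the cocycle identity give $\Phi(R_t\omega)=\sum_i c_{j_i}\,\mathcal O_{-t_i+t}(u_{j_i})=T_t\Phi(\omega)$, as required. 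The range of $\Phi$ contains the vectors produced by one-point configurations, that is (multiples of) the $u_j$, and, shrinking the position interval and using $\tfrac1h\int_0^h T_su_j\,ds\to u_j$, a dense subset of $X_0$; hence $\Phi$ has dense range.

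\emph{The main obstacle.} The genuinely delicate point is showing that the series defining $\Phi$ converges almost surely and that $\Phi$ is continuous (hence Borel) on a translation-invariant Borel set $Z$ of full $\overline\mu$-measure. Pettis integrability is a weak, first-order condition --- $s\mapsto\norm{\mathcal O_s(x)}$ need not be integrable --- so absolute convergence is not available, and one must instead exploit the tail estimate above: far from the origin the configuration has to be forced to involve only orbit vectors with already $\eps$-small Pettis tails, and to be sparse there, so that the remote part of the sum contributes an arbitrarily small vector. Arranging this simultaneously with translation invariance (which forbids any fixed partition of $\R$ into windows), full support (every mark must be charged), and strong mixing (genuine asymptotic independence over far-apart windows) is precisely what the choice of $\overline\mu$, of the weights $c_j$, and of the coupling between a configuration's location and its admissible marks must accomplish. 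Once $\Phi$ is known to be a well-defined continuous map with dense range satisfying $\Phi R_t=T_t\Phi$, the theorem follows from the reduction in the first paragraph.
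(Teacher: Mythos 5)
Your high-level reduction (model system, factor map, push-forward) is exactly the strategy the paper announces before the theorem, and your identification of the extended orbit $\mathcal O_s(x)$ and of the intertwining mechanism is correct. But the proposal has two genuine gaps, and the second is the entire substance of the proof. First, your factor map $\Phi(\omega)=\sum_i c_{j_i}\,\mathcal O_{-t_i}(u_{j_i})$ is a sum of \emph{point values} of the orbit. Hypotheses (ii)--(iii) give Pettis integrability, whose tail consequence controls $\norm{\int_K \mathcal O_s(x)\,ds}$ for compact $K$ far from the origin --- it says nothing about $\norm{\mathcal O_s(x)}$ for individual large $s$, which can grow arbitrarily fast; you note this yourself (``$s\mapsto\norm{\mathcal O_s(x)}$ need not be integrable'') and then define $\Phi$ in a form the hypotheses cannot control; no choice of scalar weights $c_j$ fixes this. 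The paper's $\Phi$ instead \emph{integrates} the orbit over the intervals between consecutive configuration points, $\Phi(f_{(s_j,n_j)_j})=\sum_j\int_{s_{2j}}^{s_{2j+2}}T_t x_{n_j}\,dt+\cdots$, which is precisely the quantity the Pettis tail estimate bounds.

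Second, and more seriously, the paragraph you label ``the main obstacle'' is not a proof sketch but a statement of the problem: you observe that convergence of the remote part of the sum requires coupling a configuration's location to its admissible marks, that stationarity forbids building such a coupling into the measure, and that full support requires every mark to be charged everywhere --- and you stop there. The paper's resolution is the actual content of the theorem: the marks are i.i.d.\ with a law $p$ on $\N$ chosen so that $\beta_j:=(\sum_{i=1}^j p_i)^{N_{j+1}-N_j}$ satisfies $\prod_j\beta_j>0$; the coupling lives not in the measure but in the \emph{set} $H$ of configurations whose mark at position index $k$ with $N_m\le|k|<N_{m+1}$ lies in $\{1,\dots,m\}$; the condition $\prod_j\beta_j>0$ gives $\overline\mu(H)>0$, and then $Z=\bigcup_t R_t(H)$ is invariant with positive measure, hence of full measure by ergodicity, and on $Z$ the defining series converges by the unconditional-convergence estimate built from the sequence $(N_n)$. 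Without this construction (and the accompanying a.e.-continuity and full-support estimates --- note the paper only obtains continuity of $\Phi$ almost everywhere, so your claim that $\Phi^{-1}(U)$ is open cannot be used as stated, and full support must be proved by a direct lower bound on the measure of a cylinder of configurations) the argument is an outline of intent rather than a proof.
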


\begin{proof}
\vspace{3mm}
We suppose $X_0=\{x_n; n\in\mathbb{N}\}$ with $x_1=0$. Let $U_n=B(0,\frac{1}{2^n})$, the open ball of radius $1/2^n$ centered at $0$. By conditions (ii) and (iii)  we can obtain an increasing sequence $\{N_n\}_n\in\mathbb{N}$  with $N_{n+2}-N_{n+1}>N_{n+1}-N_n$ for all $n\in\mathbb{N}$ such that,  for any sequence $(C_k)_k$   of mutually disjoint compact sets with $C_k\subset [k/2,+\infty [$, $k\in\N$, we have that
\[
\sum_{k\geq N_n}\int_{C_k}T_tx_{m_k} dt \in U_{n+1}\quad\mbox{and}\quad\sum_{k\geq N_n}\int_{C_k}S_tx_{m_k} dt \in U_{n+1}
\]
\begin{equation}\label{unconditional}
\mbox{if }  m_k\leq 2l, \mbox{ for } N_l\leq k< N_{l+1}, \ \ l\geq n, \ \ n\in\N .
\end{equation}

\noindent \textbf{1.-The model probability space $(Z,\overline{\mu})$.}

\vspace{3mm}
  First of all, we define  the following set $\textit{A}\subset C(\R)$ of continuous functions: $f\in\textit{A}$ if there exist a sequence $(s_i)_{i\in\mathbb{Z}}$ of real numbers such that $\dots s_{-4}<s_{-2}<0\leq s_0<s_2<s_4<\dots $, $\frac{1}{2}<s_{2i+2}-s_{2i}<\frac{3}{2}$, $s_{2i+1}=(s_{2i}+s_{2i+2})/2$, $i\in\Z$, and a sequence of natural numbers $(n_i)_{i\in\mathbb{Z}}$ such that $f(s_{2i})=n_i$, $f(s_{2i+1})=0$  and $f''|_{]s_i,s_{i+1}[}\equiv 0$   for all $i\in\mathbb{Z}$.

\begin{figure}
\begin{center}\scalebox{.65}{\includegraphics{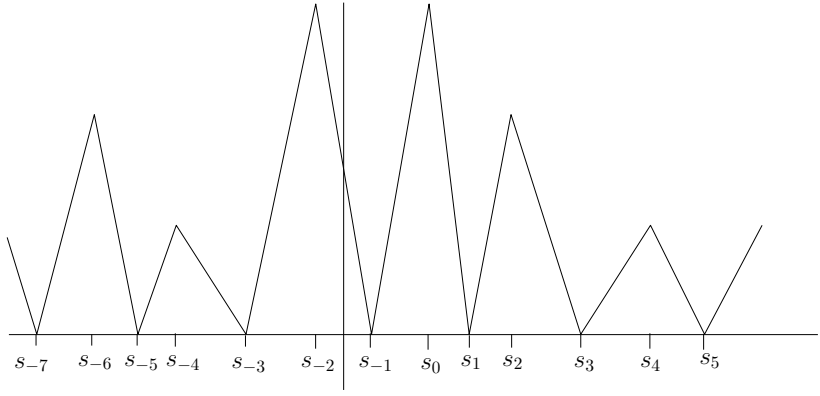}}\end{center}\caption{Graph of a typical function $f\in A$}\label{figure1}
\end{figure}

 Let us define, for each $\alpha=((s_{2j})_{j=-n}^m, (n_j)_{j=-n}^m,\varepsilon)\in \R^{n+m+1}\times \N^{n+m+1}\times ]0,1/4[$ with $s_{-2n}<\dots <s_{-2}<0\leq s_0<s_2< \dots<s_{2m} $, $\frac{1}{2}<s_{2j+2}-s_{2j}<\frac{3}{2}$,  $j=-n, \dots ,m-1$,  the set
 $$
 A_{\alpha}=\{f\in\textit{A} \ ; \ \exists t_{2j}\in ]s_{2j}-\varepsilon,s_{2j}+\varepsilon[ \mbox{ with } f(t_{2j})=n_j, \ f(t_{2j+1})=0
 $$
 $$
 \mbox{for } t_{2j+1}:=\frac{t_{2j}+t_{2j+2}}{2}, \ j=-n,\dots ,m-1, \ f''|_{]t_i,t_{i+1}[}\equiv 0, \ i=-2n, \dots ,2m-1 \} .
 $$
 They form a base of open sets in $\textit{A}$   as a topological subspace of $C(\mathbb{R})$ endowed with the compact-open topology.

We will introduce a measure in $\textit{A}$, and it suffices  to define it in the base of open sets considered above for sufficiently small $\eps$.
Actually, if $|s_{2j+2}-s_{2j}-1|<\frac{1}{2}-2\eps$, $j=-n,\dots , m-1$, we set
\begin{equation}\label{measurebase}
\overline{\mu}(A_{((s_{2j})_{j=-n}^m,(n_j)_{j=-n}^m,\varepsilon )})=\prod_{j=-n}^m2\varepsilon p(\{n_j\}),
\end{equation}
where $p$ is the probability measure defined in $\mathbb{N}$ such that $p(\{n_j\})=p_j$, with $0<p_j<1$, $\sum_{j=1}^\infty p_j=1$ and, if
\begin{equation}\label{p_j}
\beta_j:=(\sum_{i=1}^jp_i)^{N_{j+1}-N_j}, j\in\mathbb{N},\quad\mbox{then}\quad\prod_{j=1}^\infty\beta_j>0.
\end{equation}
As an easy application of Fubini's Theorem, and because of the selection of the $s_i$, it is clear that $\overline{\mu}$ is a full support Borel probability measure on $A$. Moreover,  $\textit{A}$ is
$R_t$-invariant for any $t\in\R$,  where $(R_t)_{t\in\mathbb{R}}$ is the translation $C_0$-group, since given $f_{(s_j,n_j)_j}\in\textit{A}$ we have that  $R_t(f_{(s_{j},n_j)_j})=f_{(t+s_{j+2k},n_{j+k})_j}\in\textit{A}$, where
\begin{equation}\label{despl}
k:=\min\{j\in\mathbb{Z} \ ; \ t+s_{2j}\geq 0\} .
\end{equation}
The definition of $\overline{\mu}$ given in \eqref{measurebase} yields that $\overline{\mu}$ is $(R_t)_{t\in\mathbb{R}}$-invariant.

 We also note that $\overline{\mu}$ is strongly mixing with respect to the translation $C_0$-group $(R_t)_{t\in\mathbb{R}}$:
 Let  $A_{\alpha}$ and $A_{\alpha'}$ be two elements from the above base of open sets in $\textit{A}$, where $\alpha=((s_{2j})_{j=-n}^m, (n_j)_{j=-n}^m,\varepsilon)$ and $\alpha=((s'_{2j})_{j=-n'}^{m'}, (n'_j)_{j=-n'}^{m'},\varepsilon')$. If $t$ is large enough then $[s_{-n}-\varepsilon,s_m+\varepsilon]\bigcap [t+s'_{-n'}-\varepsilon',t+s'_{m'}+\varepsilon']=\emptyset$  and
 $$
 \mu(A_{\alpha}\cap R_{t}(A_{\alpha'}))=\mu(A_{\alpha})\mu(A_{\alpha'}).
 $$
 Let us consider the compact subset of $A$ given by
 $$
 H=\{f_{(s_k,n_k)_k}\in A \ ; \ n_k=f(s_{2k})\in\{1,\ldots,m\} \mbox{ if } N_m\leq |k|< N_{m+1},
 $$
 $$
  m\in \N, \ f(s_{2k})=1  \mbox{ for }  |k|< N_1\}.
 $$
Let  $Z=\bigcup_{t\in\mathbb{R}}R_t(H)=\bigcup_{j\in\Z} R_j(H)$, therefore a Borel subset of $A$. We easily get
$$
\overline{\mu}(Z)\geq \overline{\mu}(H)
 =( p_1)^{2N_1-1}(\prod_{l=1}^\infty \beta_l)^2>0.
$$
Since $Z$ is $R_t$-invariant and it has positive measure, then $\overline{\mu}(Z)=1$

\vspace{3mm}

\noindent \textbf{2.-The map $\Phi$.}

\vspace{3mm}

Given $t_0\in\mathbb{R}$ we define the map  $\Phi:R_{t_0}(H)\rightarrow X$  by
\begin{equation}\label{themap}
\Phi(f_{(s_j,n_j)_j})=\sum_{j\leq -2}\int_{s_{2j}}^{s_{2j+2}}S_{-t}x_{n_j}+ \int_{s_{-2}}^0 S_{-t}x_{n_{-1}}+\int_{0}^{s_0}T_tx_{n_{-1}}+\sum_{j\geq 0}\int_{s_{2j}}^{s_{2j+2}}T_tx_{n_j}
\end{equation}
 $\Phi$ is well defined since, given $f_{(s_j,n_j)_j}\in R_{t_0}(H)$, and for $l\geq |t_0|$, we have that $n_k\leq 2l$ if $N_l< |k|\leq N_{l+1}$, which shows the convergence of the series in \eqref{themap} by \eqref{unconditional}.
  Let us see that $T_a\circ\Phi=\Phi\circ R_a$ for any $a>0$. We will distinguish two cases:
\begin{itemize}
\item[Case 1] $0<a<-s_{-2}$:
$$
T_a\circ\Phi(f_{(s_j,n_j)_j})=\sum_{j\leq  -2}\int_{a+s_{2j}}^{a+s_{2j+2}}S_{-t}x_{n_j}+\int_{a+s_{-2}}^0 S_{-t}x_{n_{-1}}+\int_{0}^{a+s_0}T_{t}x_{n_{-1}}
$$
$$
+\sum_{j\geq 0}\int_{a+s_{2j}}^{a+s_{2j+2}}T_{t}x_{n_j}=\Phi(f_{(a+s_{j},n_{j})_j})=\Phi\circ R_a(f_{(s_j,n_j)_j})
$$
since, in this case, $0=\min\{j\in\mathbb{Z} \ ; \ a+s_{2j}\geq 0\}$

\item[Case 2] $s_{2k}<-a\leq s_{2k+2}$, for some $k\in\mathbb{Z}^{-}$, $k\leq -2$:

$$
T_a\circ\Phi(f_{(s_j,n_j)_j})=\sum_{j<k } \int_{a+s_{2j}}^{a+s_{2j+2}}S_{-t}x_{n_j}+\int_{a+s_{2k}}^0 S_{-t}x_{n_{k}}+\int_{0}^{a+s_{2k+2}}T_{t}x_{n_{k}}
$$
$$
+\sum_{j>k }\int_{a+s_{2j}}^{a+s_{2j+2}}T_{t}x_{n_j}=\Phi(f_{(a+s_{j+2k+2},n_{j+k+1})_j})=\Phi\circ R_a(f_{(s_j,n_j)_j})
$$
since, in this case, $k+1=\min\{j\in\mathbb{Z} \ ; \ a+s_{2j}\geq 0\}$.
\end{itemize}

Also, $\Phi$ is continuous almost everywhere on $R_{t_0}(H)$ for any $t_0\in\R$. Indeed, let $(f_{(s_j^k,n_j^k)_j})_k$ be a sequence in $R_{t_0}(H)$ that converges to $f_{(s_j,n_j)_j}\in R_{t_0}(H)$ with  $s_0>0$.
Then, for any compact set $C\subset \R$, we have that
$$
\lim_{k \rightarrow\infty}\sup_{x\in C}d(f_{(s_j^k,n_j^k)_j}(x),f_{(s_j,n_j)}(x))=0.
$$
In particular, for any $N\in\mathbb{N}$ and $\varepsilon>0$, there exists $n_0\in\mathbb{N}$ such that,
\begin{equation}\label{a}
\mbox{if } |j|\leq N  \mbox{ and }  k\geq n_0,  \mbox{ then }  n_j^k=n_j \mbox{ and }  |s_{2j}^k-s_{2j}|<\varepsilon.
\end{equation}

Fix $n>|t_0|$ and $N=N_n$. Let $\eps>0$ such that $\norm{\int_I S_{-t}x_{n_j}dt}+\norm{\int_J T_{t}x_{n_j}dt}<(3(N+1)2^{n+1})^{-1}$
whenever $I\subset ]-\infty,0]$ and $J\subset [0,+\infty [$ are intervals
of length less than $\eps$ and $|j|\leq N$. By \eqref{a} and \eqref{unconditional},
$$
\|\Phi(f_{(s_j^k,n_j^k)_j}^k)-\Phi(f_{(s_j,n_j)})\|\leq
\norm{\sum_{j<-N_n}\int_{s_{2j}^k}^{s_{2j+2}^k}S_{-t}x_{n_j}^k} +
\norm{\sum_{j>N_n}\int_{s_{2j}^k}^{s_{2j+2}^k}T_{t}x_{n_j}^k}
$$
$$
+\norm{\sum_{j<-N_n}\int_{s_{2j}}^{s_{2j+2}}S_{-t}x_{n_j}}+
\norm{\sum_{j>N_n}\int_{s_{2j}}^{s_{2j+2}}T_{t}x_{n_j}}
+\sum_{-N_n\leq j\leq -2}\norm{\int_{\min{(s_{2j}^k,s_{2j})}}^{\max{(s_{2j}^k,s_{2j})}}S_{-t}x_{n_{2j}}}
$$
$$
+ \sum_{-N_n\leq j\leq -2}\norm{\int_{\min{(s_{2j+2}^k,s_{2j+2})}}^{\max{(s_{2j+2}^k,s_{2j+2})}}S_{-t}x_{n_{j}}}+
\norm{\int_{\min{(s_{-2}^k,s_{-2})}}^{\max{(s_{-2}^k,s_{-2})}}S_{-t}x_{n_{-1}}}+
\norm{\int_{\min{(s_{0}^k,s_{0})}}^{\max{(s_{0}^k,s_{0})}}T_{t}x_{n_{-1}}}
$$
$$
 \sum_{0\leq j\leq N_n}\norm{\int_{\min{(s_{2j}^k,s_{2j})}}^{\max{(s_{2j}^k,s_{2j})}}T_{t}x_{n_{j}}}
+ \sum_{0\leq j\leq N_n}\norm{\int_{\min{(s_{2j+2}^k,s_{2j+2})}}^{\max{(s_{2j+2}^k,s_{2j+2})}}T_{t}x_{n_{j}}} < \frac{1}{2^{n-1}}+\frac{1}{2^n}.
$$

 This shows the continuity almost everywhere of $\Phi:R_t(H)\rightarrow X$ for every $t\in\mathbb{R}$.
 The map $\Phi$ is well-defined on $Z$, and $\Phi:Z\rightarrow X$ is measurable (i.e., $\Phi^{-1}(A)\in\mathfrak{B}(Z)$ for every $A\in\mathfrak{B}(X)$).

\vspace{3mm}

\noindent \textbf{3.-The measure $\mu$ on $X$.}

\vspace{3mm}

 $L(t):=\Phi(R_t(H))$ is compact in $X$, $t\in\mathbb{R}$, and $Y:=\bigcup_{t\in\mathbb{R}} L(t)$ is  a   $T_t$-invariant Borel subset of $X$ because $\Phi\circ R_t=T_t\circ\Phi$.

We then define in $X$ the  measure $\mu(B)=\overline{\mu}(\Phi^{-1}(B))$ for all $B\in\mathfrak{B}(X)$. Obviously, $\mu$ is well-defined and it is a $(T_t)_t$-invariant strongly mixing Borel probability  measure.
The proof is completed by showing that $\mu$ has full support. In the proof of \cite[Theorem 2.2]{mangino_peris2011frequently} we showed that, for
$u_k:=\int_0^1 T_tx_kdt$, $k\in\N$, the set $\{ u_k \ ; \ k\in\N\}$ is dense in $X$. Thus, given a non-empty open set $U$ in $X$, we pick $n\in\N$ and $\eps>0$ satisfying
$$
\int_{s_0}^{s_2} T_tx_ndt+ U_n \subset U
$$
for any $s_0 \in [0,\eps ]$, $s_2 \in [1,1+\eps]$.
 Together with \eqref{unconditional}, this implies

 $$
\mu(U)\geq \mu\left(\left\{   \Phi(f_{(s_j,n_j)_j}) \ ; \   f_{(s_j,n_j)_j}\in Z, \  s_0 \in [0,\eps ], \ s_2 \in [1,1+\eps], \right. \right.
$$
$$
\left. \left.  n_0=n, \ n_k=1 \mbox{ if } 0<|k|\leq N_n,   \  n_k\leq 2l, \mbox{ for } N_l< |k|\leq N_{l+1}, \   l\geq n      \right\}\right)
$$
$$
\geq\eps^2p_n( p_1)^{2N_n}\prod_{l=n}^\infty\left(\prod_{N_l<|k|\leq N_{l+1}}\sum_{r=1}^{2l} p_r\right)>\eps^2p_n( p_1)^{2N_n}\prod_{l=n}^\infty\left(\beta_{l}\right)^2>0
$$

\end{proof}

\begin{remark}
There exists an alternative way of defining the measure on the space of continuous functions, by using Brownian motions (for more details see \cite{Rdn85},\cite{Rdn88}). We denote by $\mathfrak{B}=\mathfrak{B}(C([0,\infty)))$, the $\sigma$-algebra of Borel subsets of $C([0,\infty))$. Let $\omega_t$, $t\geq 0$, be a Brownian motion defined on a probability space $(\Omega,\Sigma,P)$. Assume that the sample functions of $\omega_t$ are continuous. Now, if we denote by $F_A$ the $\sigma$-algebra of events generated by the process $\omega_t$ for $t\in A$. Setting $\xi_t=e^t\omega_{e^{-2t}}$ for $t\geq 0$, then $\xi_t$ is a stationary Gaussian process with mean value $E\xi_t=0$ and correlation function $E \xi_t\xi_{t+h}=e^{-|h|}$. Then  the measure on $\mathfrak{B}=\mathfrak{B}(C([0,\infty)))$ induced by $\xi_t$ is strongly mixing with full support.\\
\end{remark}

In \cite[Cor. 2.3]{mangino_peris2011frequently} some conditions, expressed in terms of eigenvector fields for the infinitesimal generator of the $C_0$-semigroup, were obtained to satisfy the hypothesis of Theorem~\ref{main}.
In consequence we also obtain the stronger result of existence of invariant mixing measures under the same conditions.

\begin{corollary}\label{cor:dsw} Let $X$ be a separable complex Banach space and let
$\Tt$ be a $C_0$-semigroup on $X$ with generator $A$.
 Assume that there exists a family $(f_j)_{j\in \Gamma }$ of locally bounded measurable maps
 $f_j:I_j \rightarrow X$ such that $I_j$ is an interval in $\R$,
                    $Af_j(t)=itf_j(t)$ for every $t \in I_j$, $j\in \Gamma$ and
                     $\spa \{f_j (t) \ : \ j\in \Gamma,\ t\in I_j\}$ is dense in $X$.
If either

a) $f_j \in C^2(I_j, X)$, $j\in \Gamma$,\\
or

b) $X$ does not contain $c_0$  and $\langle \varphi,f_j\rangle \in C^{1}(I_j)$, $\varphi\in X'$, $j\in\Gamma$,\\
then there is a $(T_t)_t$-invariant strongly mixing Borel probability measure $\mu$ on $X$ with full support.
\end{corollary}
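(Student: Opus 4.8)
The plan is to deduce Corollary~\ref{cor:dsw} from Theorem~\ref{main} by verifying that the stated eigenvector-field hypotheses allow us to construct the dense subspace $X_0$ and the backward maps $S_t$ with the required Pettis integrability. This is precisely the reduction carried out in \cite[Cor. 2.3]{mangino_peris2011frequently} for frequent hypercyclicity; since Theorem~\ref{main} has exactly the same hypotheses as Theorem~\ref{fhcs}, the \emph{same} verification applies verbatim and yields the stronger mixing conclusion.

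Concretely, first I would fix a countable subset $D\subset \bigcup_j I_j$, with $D\cap I_j$ dense in $I_j$ for each $j\in\Gamma$, chosen so that $\spa\{f_j(t): j\in\Gamma,\ t\in D\cap I_j\}$ is still dense in $X$ (possible because each $f_j$ is at least continuous, in case (b) weakly continuous, so values at a dense set of parameters span the same closed subspace as all values). Then one takes $X_0$ to be the linear span over $\mathbb{Q}+i\mathbb{Q}$ of finitely supported ``eigenvector integrals'' of the form $\int_{J} \phi(t) f_j(t)\,dt$ where $J\Subset I_j$ is a compact interval and $\phi$ is a smooth compactly supported bump — more precisely one follows the Mangino--Peris construction, where $X_0$ consists of elements $x=\sum_{\text{finite}} \int_{I_j} \phi_j(t) f_j(t)\,dt$ with $\phi_j$ suitably smooth and compactly supported so that, using $Af_j(t)=itf_j(t)$, the semigroup acts as $T_s x = \sum \int \phi_j(t) e^{its} f_j(t)\,dt$ and the backward maps are defined by $S_s x = \sum \int \phi_j(t) e^{-its} f_j(t)\,dt$ for $s>0$. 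With this definition the algebraic relations (i) of Theorem~\ref{main}, namely $T_sS_sx=x$ and $T_sS_rx=S_{r-s}x$ for $r>s>0$, are immediate from $e^{its}e^{-itr}=e^{-it(r-s)}$ and $e^{its}e^{-its}=1$.

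The analytic content is the Pettis integrability (ii) and (iii): one must show $s\mapsto T_sx$ and $s\mapsto S_sx$ are Pettis integrable on $[0,\infty)$ for $x\in X_0$. For a single generator $\int_{I_j}\phi_j(t)e^{\pm its}f_j(t)\,dt$ one integrates by parts in $t$: under hypothesis (a), $\phi_j f_j\in C^2_c(I_j,X)$, so two integrations by parts produce a factor $s^{-2}$ times a Bochner-integrable ($X$-valued, compactly supported continuous) integrand, giving $\norm{T_sx}+\norm{S_sx}=O(s^{-2})$, hence Bochner — a fortiori Pettis — integrability on $[0,\infty)$. Under hypothesis (b), one only has one derivative and only in the weak sense; here one integrates by parts once against each $\varphi\in X'$ to get $\langle\varphi,T_sx\rangle=O(s^{-1})$ with a constant controlled by $\varphi$ on a compact set, which shows Dunford integrability, and then invokes the fact that $X$ contains no copy of $c_0$ together with the unconditional-convergence characterisation (the Pettis-series result quoted before Section~2, combined with the Bessaga--Pełczyński $c_0$-theorem) to upgrade Dunford to Pettis integrability. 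I expect this last upgrade — passing from weak ($\,=\,$Dunford) to Pettis integrability using the absence of $c_0$ — to be the only genuinely delicate point; everything else is the routine bookkeeping of the Mangino--Peris argument. Having verified (i)--(iii), Theorem~\ref{main} applies directly and delivers the $(T_t)_t$-invariant strongly mixing measure with full support, completing the proof.
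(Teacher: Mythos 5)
Your proposal is correct and follows essentially the same route as the paper: the paper's entire argument for Corollary~\ref{cor:dsw} is the observation that the verification in \cite[Cor.~2.3]{mangino_peris2011frequently} shows these eigenvector-field hypotheses imply conditions (i)--(iii) of Theorem~\ref{fhcs}, which coincide with those of Theorem~\ref{main}. The additional detail you supply (the construction of $X_0$ from integrals $\int\phi_j(t)f_j(t)\,dt$, the integration by parts, and the $c_0$-based upgrade from Dunford to Pettis integrability in case (b)) is an accurate sketch of that cited verification, which the paper itself does not reproduce.
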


\section{Applications}

In this section we will present several applications of the previous results to the (chaotic) behaviour of the solution $C_0$-semigroup to certain linear partial differential equations and infinite systems of linear differential equations.

\begin{example}
Let us consider  the following linear perturbation of the one-dimensional
Ornstein-Uhlenbeck operator
\[
{\mathcal{A}_\alpha}u=u''+bxu'+\alpha u,
\]
where $\alpha\in \R$, with domain
\[
  D(\mathcal{A}_\alpha)=\left\{\, u\in L^2(\R)\cap
  W^{2,2}_{{\rm loc}}(\R) \ ; \  \mathcal{A}_\alpha u\in L^2(\R)\,\right\}.
\]
We know that, if $\alpha>b/2>0$, then the semigroup generated by
$\mathcal{A}_\alpha$ in $L^2(\R)$ is chaotic \cite{conejero_mangino} and frequently hypercyclic \cite{mangino_peris2011frequently}. Actually, it was shown that the $C_0$-semigroup satisfies the hypothesis of Corollary~\ref{cor:dsw} \cite{mangino_peris2011frequently}. Therefore, we also obtain that it admits an invariant strong mixing measure with full support.
\end{example}

\begin{example} Rudnicki \cite{Rdn12invariantmeasures}  recently showed the existence of invariant mixing measures for some $C_0$-semigroups generated by a partial differential equation of population dynamics.
More precisely, he reduced the equation to
\[
\frac{\partial u}{\partial t} + x\frac{\partial u}{\partial x} = au(t,x)+bu(t,2x) ,
\]
whose formal solution, given the initial condition $u(0,x)=u_0(x)$, is
\[
u(t,x) :=e^{at} \sum_{n=0}^\infty \frac{(bt)^n}{n!} u_0(2^ne^{-t}x).
\]
He considered the space
$$
X=X_{\alpha,\beta}:=\{u\in C(]0,\infty[) \ ; \  \lim_{x\rightarrow 0}x^\alpha|u(x)|=0, \ \lim_{x\rightarrow \infty}x^\beta|u(x)|=0\}
$$
endowed with the norm $\norm{u}:=\sup_{x\in ]0,\infty [} |u(x)| \rho (x)$, where  $\rho (x)=x^\alpha$ if $x\leq 1$ and $\rho (x)=x^\beta$ if $x>1$.
If $2^ab\log 2<e^{-1}$, $\beta <\log_2b+\log_2 (\log 2)$, and $\alpha >\alpha_0$, where $\alpha_0$ satisfies $(a+\alpha_0)2^{\alpha_0}=b$, then there
exists a Borel probability measure $\mu$ on $X$ which is invariant under the solution $C_0$-semigroup generated by the above equation, is strongly mixing, and has full support \cite[Thm 1]{Rdn12invariantmeasures}. Actually, this fact was shown by reducing the problem to the translation flow $(R_t)_{t\in\R}$ on the space
\[
Y:=\{ g\in C(\R ) \ ; \ \lim_{|x|\to \infty} \frac{g(x)}{x}=0\},
\]
of weighted continuous functions with the norm
\[
\norm{g}_Y=\sup_{x\in \R} \frac{|g(x)|}{1+|x|} .
\]
The corresponding generator is $A=D$, the derivative operator. We can apply directly our Corollary~\ref{cor:dsw} to the map $f:\R\to Y$ given by $[f(t)](x):=e^{itx}$, which is a $C^2$-map, and obtain the same result since $\spa \{f(t) \ ; \ t\in \R\}$ is the set of trigonometric polynomials, which is dense in $Y$.

\end{example}

\begin{example} The chaotic behaviour associated to birth-and-death processes has been widely studied by Banasiak et alt \cite{banasiak_lachowicz2001,banasiak_lachowich_moszynski2006,banasiak_lachowich_moszynski2007,banasiak_moszynski2011}. We will consider three  cases that are shown to admit invariant mixing measures.
\begin{enumerate}
\item
In \cite{banasiak_moszynski2011} Banasiak and Moszynski studied the following ``birth-and-death'' model  with constant coefficients:

\begin{equation}\label{model_5}
    \begin{array}{ccclr}
      \frac{df_1}{dt} & = & (\mathcal{L}f)_1 = & af_1 + df_2, &  \\
       \\
      \frac{df_n}{dt} & = & (\mathcal{L}f)_n = & bf_{n-1} + af_n + df_{n+1}, & \ \ n\geq 2.
    \end{array}
\end{equation}

Among other things, they studied the chaotic behaviour of the solution $C_0$-semigroup.

\begin{theorem}[\cite{banasiak_moszynski2011}]
    Let $a,\ b,\ d \in \mathbb{R}$ satisfy $0<|b|<|d|$ and $|a|<|b+d|$. Then the solution $C_0$-semigroup
    to the Cauchy problem   (\ref{model_5}) is Devaney chaotic on  $\ell ^p$.
\end{theorem}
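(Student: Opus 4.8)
The plan is to realise the solution semigroup as $(e^{t\mathcal{L}})_{t\geq0}$ for a bounded operator $\mathcal{L}$ on $\ell^p$ and then to verify the hypotheses of Corollary~\ref{cor:dsw}(a); this already yields an invariant strongly mixing measure of full support, hence hypercyclicity, and a short additional argument produces a dense set of periodic points, so the semigroup is Devaney chaotic. Writing $F,B$ for the forward and backward shifts on $\ell^p$ (so $(Fx)_1=0$, $(Fx)_n=x_{n-1}$ for $n\geq2$, and $(Bx)_n=x_{n+1}$), the right-hand side of \eqref{model_5} is the bounded operator $\mathcal{L}=aI+bF+dB$, with $\norm{\mathcal{L}}\leq\abs a+\abs b+\abs d$; hence the Cauchy problem \eqref{model_5} is governed by the uniformly continuous semigroup $T_t=e^{t\mathcal{L}}$, whose generator is $A=\mathcal{L}$. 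We regard $\ell^p$ as a complex space, the real statement following by complexification.

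Next I would determine the point spectrum of $\mathcal{L}$. The equation $\mathcal{L}\phi=\lambda\phi$ is the recurrence $d\phi_{n+1}=(\lambda-a)\phi_n-b\phi_{n-1}$ for $n\geq2$ together with the boundary relation $d\phi_2=(\lambda-a)\phi_1$ coming from the first coordinate. If $z_1,z_2$ are the roots of $d\mu^2-(\lambda-a)\mu+b=0$ (so $z_1z_2=b/d$ and $z_1+z_2=(\lambda-a)/d$), the general solution is $\phi_n=c_1z_1^n+c_2z_2^n$, and inserting it into the boundary relation gives $z_1z_2(c_1+c_2)=0$; since $b\neq0$ this forces $c_1+c_2=0$, so every eigenvector is a scalar multiple of $(z_1^n-z_2^n)_n$. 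Writing $z_1=z$ and $z_2=b/(dz)$ and using $0<\abs b<\abs d$, the vector $f(z):=\bigl(z^n-(b/(dz))^n\bigr)_{n\geq1}$ lies in $\ell^p\setminus\{0\}$ exactly when $z$ belongs to the open annulus $\Omega:=\{z\in\C:\abs{b/d}<\abs z<1\}$ with $z^2\neq b/d$, and its eigenvalue is $\lambda(z)=a+dz+b/z$. Since $\abs z$ and $\abs{b/(dz)}$ stay bounded away from $1$ on compact subsets of $\Omega$, the map $f:\Omega\to\ell^p$ is holomorphic, with $\mathcal{L}f(z)=\lambda(z)f(z)$.

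It then remains to check density and an imaginary eigenvalue. For density: if $\psi=(\psi_n)$ in the dual of $\ell^p$ annihilates $\{f(z):z\in\Omega\}$, then $h(z):=\sum_{n\geq1}\psi_n z^n$ (holomorphic on $\abs z<1$, with $h(0)=0$, since $(\psi_n)$ is bounded) satisfies $h(z)=h(b/(dz))$ on $\Omega$; patching $h$ on $\{0<\abs z<1\}$ with $z\mapsto h(b/(dz))$ on $\{\abs z>\abs{b/d}\}$ defines an entire function that tends to $0$ as $z\to0$ and as $z\to\infty$, hence vanishes by Liouville, so $\psi=0$; and by the identity theorem the span stays dense if $z$ is restricted to any subset of $\Omega$ having an accumulation point. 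For the eigenvalue: with $z=re^{i\theta}$ one has $\operatorname{Re}\lambda(z)=a+(dr+b/r)\cos\theta$, and at the outer radius $r=1$ the factor $dr+b/r$ equals $b+d$, which is nonzero (as $\abs b<\abs d$) and satisfies $\abs a<\abs{b+d}$ by hypothesis; so $-a/(dr+b/r)\in(-1,1)$ for $r$ slightly below $1$, and picking $r_0<1$ close to $1$ with $r_0^2\neq\abs{b/d}$ and $\theta_0$ with $\cos\theta_0=-a/(dr_0+b/r_0)$ gives $z_0=r_0e^{i\theta_0}\in\Omega$ with $\operatorname{Re}\lambda(z_0)=0$ and $\lambda'(z_0)=d-b/z_0^2\neq0$. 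By the implicit function theorem the real-analytic level curve $\{z\in\Omega:\operatorname{Re}\lambda(z)=0\}$ near $z_0$ is mapped by $\lambda$ bijectively onto an interval $iI\subset i\R$; parametrising it by $t=\operatorname{Im}\lambda$ yields a real-analytic (hence $C^2$) map $f_1:I\to\ell^p$, $f_1(t)=f(z(t))$, with $\mathcal{L}f_1(t)=itf_1(t)$ and, by the above, $\spa\{f_1(t):t\in I\}$ dense. This is precisely the hypothesis of Corollary~\ref{cor:dsw}(a) (a single index suffices), so $(T_t)_{t\geq0}$ is frequently hypercyclic, hence hypercyclic, and admits an invariant strongly mixing Borel probability measure of full support. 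Finally, whenever $t=2\pi p/q\in I$ the vector $f_1(t)$ is a periodic point ($T_q f_1(t)=e^{iqt}f_1(t)=f_1(t)$), and finite linear combinations of such vectors are again periodic; since $I\cap2\pi\mathbb{Q}$ has an accumulation point, $\spa\{f_1(t):t\in I\cap2\pi\mathbb{Q}\}$ is dense, so the periodic points are dense and the semigroup is Devaney chaotic.

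The only genuinely delicate point is the boundary term at $n=1$: it stops the eigenvectors from being pure geometric sequences $z^n$ and replaces them by the antisymmetric combinations $z_1^n-z_2^n$ with $z_1z_2=b/d$ — this is where $b\neq0$ enters, and it is responsible both for $f$ vanishing exactly on $\{z^2=b/d\}$ and for the symmetry $z\mapsto b/(dz)$ underlying the functional equation in the density proof. Beyond that one must see that the two hypotheses do exactly what is needed: $0<\abs b<\abs d$ makes the annulus $\Omega$, where $z$ and its partner $b/(dz)$ both have modulus $<1$, nonempty, while $\abs a<\abs{b+d}$ is precisely the condition $-a/(dr+b/r)\in(-1,1)$ as $r\uparrow1$, that is, that $\operatorname{Re}\lambda$ can be made to vanish on $\Omega$. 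Everything else — boundedness of $\mathcal{L}$, holomorphy of $f$, the Liouville step, the implicit function theorem, and the verification of Corollary~\ref{cor:dsw} — is routine.
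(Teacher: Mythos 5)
Your argument is correct and follows exactly the route the paper attributes to this result: the paper does not prove the theorem itself but cites \cite{banasiak_moszynski2011} and notes that the proof there verifies a Desch--Schappacher--Webb-type spectral criterion subsumed by Corollary~\ref{cor:dsw}, which is precisely what you do by computing the eigenvector field $f(z)=\bigl(z^n-(b/(dz))^n\bigr)_n$ on the annulus, proving density of its span via the Liouville argument, and locating a curve of purely imaginary eigenvalues using $|a|<|b+d|$. The only point you gloss over is the passage from the complexification back to the real space $\ell^p$ (Devaney chaos of the complexified semigroup is not formally the same statement), but this is a standard technicality handled by taking real and imaginary parts of the eigenvector fields and is routinely suppressed in this literature.
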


Actually, to show this result they used a spectral criterion (see \cite{banasiakmoszynski05} and \cite{deschschappacher97}) which is less general than the criterion of Corollary~\ref{cor:dsw}.
In consequence, we obtain that the solution $C_0$-semigroup
    to the Cauchy problem   (\ref{model_5}) admits an invariant mixing measure on $\ell ^p$ with full support.

\item
 In \cite{arozaperis}, Aroza and Peris studied the same model with variable coefficients,
 \begin{equation}\label{birthdeath}
    \begin{array}{ccclr}
      \frac{df_1}{dt} & =  & a_1f_1 + d_1f_2, &  \\
       \\
      \frac{df_n}{dt} & =  & b_nf_{n-1} + a_nf_n + d_nf_{n+1}, & \ \ n\geq 2.
    \end{array}
\end{equation}
  with $a_n,b_n,d_n\in\mathbb{R}$ and the infinite matrix
$$\mathcal{L}=\begin{pmatrix}
a_1&d_1&\empty & \empty & \empty\\
b_2&a_2&d_2&\empty &\empty\\
\empty &b_3&a_3&d_3 & \empty\\
\empty &\empty &b_4 & a_4 & \ddots\\
\empty & \empty & \empty & \ddots & \ddots
\end{pmatrix}.$$

They intended to obtain sub-chaos (i.e., Devaney chaos on a subspace) results for birth-and-death type models with proliferation in a wide range of variable coefficients. They considered the  Banach space $X$ on which the operator associated with $\mathcal{L}$ generates a $C_0$-semigroup. Given $1\leq p<\infty$, let
$$
X=X(\gamma):=\{f\in \ell^p:\mathcal{L}^nf\in\ell^p,\forall n\in\mathbb{N}, \mbox{ and } ||f||:=\sum_{n=0}^\infty||\mathcal{L}^nf||_p\gamma^{-n}<\infty\}.
$$
If the sequences $(a_n)_n,(b_n)_n$ and $(d_n)_n$ are bounded, $\mathcal{L}$ has an associated bounded operator $\mathcal{S}_p$ on $\ell^p$, with spectral radius $r(\mathcal(S_p)<\infty$, and $X(\gamma)=\ell^p$ for $\gamma>r(\mathcal{S}_p)$. If any of the sequences $(a_n)_n,(b_n)_n$ and $(d_n)_n$ is unbounded, we have that the operator $\mathcal{S}_X$ associated with $\mathcal{L}$ is a bounded operator on $X$ and, therefore, it generates a $C_0$-semigroup $\mathcal{T}_X$ on $X$. They obtained the following result:

\begin{theorem}[\cite{arozaperis}]
Let $(a_n),(b_n)$ and $(d_n)_n$ be sequences of real numbers such that $d_n\neq 0$ for all $n\in\mathbb{N}$, $1\leq p<\infty$, and $\gamma >0$. Assume that either
\begin{itemize}
\item [Case 1.] $\lim_{n\rightarrow \infty}a_n=a,\lim_{n\rightarrow \infty}b_n=b,\lim_{n\rightarrow \infty}d_n=d\neq 0$ with $|b|<|d|$ and $|a|<|b+d|$ or
\item [Case 2.] $\lim_{n\rightarrow \infty}\frac{a_n}{d_n}=\alpha,\lim_{n\rightarrow \infty}\frac{b_n}{d_n}=\beta,\lim_{n\rightarrow \infty}d_n=\infty$ with $\alpha^2\neq 4\beta$, $|\beta|<1$ and $|\alpha|<|1+\beta|$
\end{itemize}
then the $C_0$-semigroup $\mathcal{T}_X$ is sub-chaotic on $X(\gamma)$. Moreover, in case 1, $\mathcal{S}_p$ generates a sub-chaotic $C_0$-semigroup $\mathcal{T}_p$ on $\ell^p$.
\end{theorem}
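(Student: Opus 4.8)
The plan is to verify the classical eigenvalue criterion for Devaney chaos of a $C_0$-semigroup (Desch--Schappacher--Webb, the continuous analogue of the Godefroy--Shapiro criterion), applied on a suitable closed invariant subspace, and to arrange that the relevant eigenvectors actually lie in $X(\gamma)$ (resp. in $\ell^p$ in Case~1). First I would record that, by the very definition of the weighted norm $\norm{f}=\sum_{n\geq 0}\norm{\mathcal{L}^nf}_p\gamma^{-n}$, the matrix $\mathcal{L}$ induces a bounded operator $\mathcal{S}_X$ on $X(\gamma)$ with $\norm{\mathcal{S}_X}\leq\gamma$, so that $\mathcal{S}_X$ generates the uniformly continuous semigroup $\mathcal{T}_X=(e^{t\mathcal{S}_X})_{t\geq 0}$; and if $\mathcal{L}f=\lambda f$ with $\abs{\lambda}<\gamma$, then $\mathcal{L}^nf=\lambda^nf$ yields $\norm{f}=\norm{f}_p\,(1-\abs{\lambda}/\gamma)^{-1}<\infty$, i.e. every such eigenvector already belongs to $X(\gamma)$. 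In Case~1, where $(a_n),(b_n),(d_n)$ are bounded, the same computation applies with $\ell^p$ in place of $X(\gamma)$, $\mathcal{L}$ inducing a bounded operator $\mathcal{S}_p$ on $\ell^p$.

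Second, I would analyse the eigenvalue equation $\mathcal{S}_Xf=\lambda f$, that is, the three-term recurrence $b_nf_{n-1}+a_nf_n+d_nf_{n+1}=\lambda f_n$ for $n\geq 2$ together with the boundary relation $a_1f_1+d_1f_2=\lambda f_1$. Normalising $f_1=1$, the $f_n$ become polynomials in $\lambda$, so $\lambda\mapsto(f_n(\lambda))_n$ is entire coordinatewise. By a Poincar\'e--Perron argument, the limit hypotheses force every solution of the recurrence to decay geometrically precisely for $\lambda$ in an open region $\Omega\subset\C$: in Case~1, $\Omega$ is the set of $\lambda$ for which both roots of $dz^2+(a-\lambda)z+b$ lie in the open unit disk, and the inequalities $0<\abs{b}<\abs{d}$ and $\abs{a}<\abs{b+d}$ are exactly the Schur--Cohn conditions guaranteeing $0\in\Omega$; in Case~2, dividing by $d_n\to\infty$ turns the limiting characteristic polynomial into $z^2+\alpha z+\beta$, and $\abs{\beta}<1$, $\abs{\alpha}<\abs{1+\beta}$, $\alpha^2\neq 4\beta$ guarantee that its (distinct) roots lie in the open disk, so that all solutions decay geometrically and $\Omega\supset\{\abs{\lambda}<\gamma\}$. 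On $\Omega$ the decay is locally uniform, so $\lambda\mapsto f(\lambda)=(f_n(\lambda))_n$ is a holomorphic map into $\ell^p$, and into $X(\gamma)$ on $\Omega\cap\{\abs{\lambda}<\gamma\}$, with $\mathcal{S}_Xf(\lambda)=\lambda f(\lambda)$.

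Third, let $X$ be the closure in $X(\gamma)$ of $\spa\{f(\lambda):\lambda\in\Omega,\ \abs{\lambda}<\gamma\}$; it is $\mathcal{T}_X$-invariant. Holomorphy of $\lambda\mapsto f(\lambda)$ together with the Hahn--Banach theorem and the identity theorem (on the connected component of $0$ in $\Omega$) shows that for every non-empty relatively open $\Omega'$ accumulating inside $\Omega\cap\{\abs{\lambda}<\gamma\}$ the subspace $\spa\{f(\lambda):\lambda\in\Omega'\}$ is still dense in $X$. Since $0$ is an interior point of $\Omega$, one applies this to $\Omega\cap\{\mathrm{Re}\,\lambda>0\}$, to $\Omega\cap\{\mathrm{Re}\,\lambda<0\}$, and to $\{\lambda\in\Omega:\lambda/(2\pi i)\in\mathbb{Q}\}$ (the last set accumulating at $0$, and finite combinations over it giving genuinely periodic points of $\mathcal{T}_X$). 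The Desch--Schappacher--Webb criterion then gives that $\mathcal{T}_X|_X$ is hypercyclic with dense periodic points, i.e. Devaney chaotic, so $\mathcal{T}_X$ is sub-chaotic on $X(\gamma)$. In Case~1 the identical argument, carried out inside $\ell^p$ with $\Omega$ intersected with a disk small enough that the eigenvectors are $\ell^p$-summable, shows that $\mathcal{S}_p$ generates a sub-chaotic semigroup $\mathcal{T}_p$ on $\ell^p$.

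The main obstacle is the second step: controlling the asymptotics of a variable-coefficient (and, in Case~2, unbounded-coefficient) three-term recurrence precisely enough to (a) produce a nonzero $\ell^p$- (indeed $X(\gamma)$-) eigenvector depending holomorphically on $\lambda$, and (b) pin down the eigenvalue region $\Omega$, in particular to verify $0\in\Omega$ — which is exactly where the inequalities among $a,b,d$ (resp.\ $\alpha,\beta$) and the non-degeneracy $\alpha^2\neq 4\beta$ are consumed, via the Schur--Cohn criterion and Poincar\'e's theorem. Once $\Omega$ and the holomorphic eigenvector field are in hand, the passage to chaos and the identification of the invariant subspace $X$ are routine.
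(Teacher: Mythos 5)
This theorem is quoted from \cite{arozaperis}; the paper under review gives no proof of its own, but it does record that the original argument verifies the spectral criterion of \cite{banasiakmoszynski05} (the Desch--Schappacher--Webb-type eigenvalue criterion) on a suitable subspace, which is precisely the route you take. Your reconstruction --- the three-term recurrence defining an entire eigenvector field, the Schur--Cohn conditions identifying the hypotheses as exactly the requirement that $0$ lie in the interior of the admissible eigenvalue region, and the holomorphic eigenvector field feeding the spectral criterion on the closed span --- matches that approach, with the only substantive remaining work being the Poincar\'e--Perron asymptotics of the variable-coefficient recurrence that you already flag as the main obstacle.
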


Actually, to show this result they proved that the $C_0$-semigroup solution satisfies the spectral criterion of \cite{banasiakmoszynski05},
in particular the conditions of Corollary~\ref{cor:dsw} on a certain subspace $Y$. Thus, we obtain that the corresponding solution $C_0$-semigroup
   admits an invariant mixing measure $\mu$ on $X(\gamma)$ whose support is $Y$.

\item Let us consider the death model with variable coefficients
\begin{equation}\label{kinetikmodel}
    \left\{
        \begin{array}{ll}
            \frac{\partial f_n}{\partial t}=-\alpha_nf_n+\beta_nf_{n+1},& n\geq 1,\\\\
            f_n(0)=a_n, & n\geq 1\\\\
        \end{array}
    \right.
\end{equation}
 where $(\alpha_n)_n$ and $(\beta_n)_n$ are bounded positive sequences and $(a_n)_n \in \ell^1$ is a real sequence. Considering $X=\ell^1$, and the map $A$ given by
 $$
 Af=(-\alpha_nf_n+\beta_nf_{n+1})_n \mbox{ for } f=(f_n)_n\in X,
 $$
 since $A$ is a bounded operator on $\ell^1$, it generates a $C_0$-semigroup $(T_t)_{t\geq0}$ which is solution of \eqref{kinetikmodel}. It is shown in \cite{grosse-erdmann_peris2011linear}, that if
 $$
 \sup_{n\geq 1}\alpha_n<\liminf_{n\rightarrow\infty}\beta_n
 $$
 then the  semigroup $(T_t)_{t\geq0}$ satisfies the hypothesis of the spectral criterion \cite{deschschappacher97}, and then we we can ensure the existence of an invariant mixing measure with full support   on $X$.
  \end{enumerate}
\end{example}

\begin{example}
 Let us consider the  solution semigroup $(e^{tA})_{t\geq0}$ of the hyperbolic heat transfer equation problem:
\begin{equation}\label{heatproblem}
    \left\{
        \begin{array}{ll}
            \tau\frac{\partial^2u}{\partial t^2}+\frac{\partial u}{\partial t}=\alpha\frac{\partial^2u}{\partial x^2},\\\\
            u(0,x)=\varphi_1(x),   x\in\mathbb{R},\\\\
            \frac{\partial u}{\partial t}(0,x)=\varphi_2(x),  x\in\mathbb{R}
        \end{array}
    \right.
\end{equation}
 where $\varphi_1$ and $\varphi_2$ represent the initial temperature and the initial variation of temperature, respectively, $\alpha>0$ is the thermal diffusivity, and $\tau>0$ is the thermal relaxation time.
  We can represent it as a $C_0$-semigroup on the product of a certain function space with itself. We set $u_1=u$ and $u_2=\frac{\partial u}{\partial t}$. Then the associated first-order equation is :
\begin{equation}
\left\{
\begin{array}{c}
\frac{\partial}{\partial t}\left(
    \begin{array}{c}
      u_1\\
      u_{2} \\
    \end{array}
  \right)=\left(
            \begin{array}{cc}
              0 & I \\
              \frac{\alpha}{\tau}\frac{\partial^2}{\partial x^2} & \frac{-1}{\tau}I \\
            \end{array}
          \right)\left(
                   \begin{array}{c}
                     u_{1} \\
                     u_2 \\
                   \end{array}
                 \right).\\\\
  \left(
    \begin{array}{c}
      u_1(0,x) \\
      u_2(0,x) \\
    \end{array}
  \right)=  \left(
    \begin{array}{c}
      \varphi_1(x) \\
      \varphi_2(x)\\
    \end{array}
  \right)   ,  x\in\mathbb{R}
 \end{array}
    \right.
\end{equation}
 We fix $\rho>0$ and consider the space $$X_\rho=\{f:\mathbb{R}\rightarrow \mathbb{C}; f(x)=\sum_{n=0}^\infty\frac{a_n\rho^n}{n!}x^n, (a_n)_{n\geq0}\in c_0\}$$ endowed with the norm $||f||=\sup_{n\geq0}|a_n|$.

 Since

 \begin{equation}
A:=\left(
    \begin{array}{cc}
        0 & I \\
        \frac{\alpha}{\tau}\frac{\partial^2}{\partial x^2} & \frac{-1}{\tau}I \\
    \end{array}
    \right).
\end{equation}

is an operator on $X:=X_\rho\oplus X_\rho$, we have that $(e^{tA})_{t\geq0}$ is the $C_0$-semigroup solution of \ref{heatproblem}.
  We know from \cite{conejero_peris_trujillo} and \cite{grosse-erdmann_peris2011linear} that, given $\alpha$, $\tau$ and $\rho$ such that $\alpha\tau\rho>2$, then the solution semigroup $(e^{tA})_{t\geq0}$  defined on $X_\rho\oplus X_\rho$ satisfies the hypothesis of the spectral  criterion \cite{deschschappacher97}, and we conclude the existence of an invariant mixing measure with full support  on $X_\rho\oplus X_\rho$.
\end{example}

\begin{example}
In \cite{blackscholes73}, Black and Scholes proved that under some assumptions about the market, the value of a stock option $u(x,t)$, as a function of the current value of the underlying asset $x\in\mathbb{R}^+=[0,\infty)$ and time, satisfies the final value problem:
$$\left\{\begin{array}{ll}
\frac{\partial u}{\partial t}=-\frac{1}{2}\sigma^2x^2\frac{\partial^2u}{\partial x^2}-rx\frac{\partial u}{\partial x}+ru & \text{ in }\mathbb{R}^+\times[0,T]\\
u(0,T)=0& \text{ for } t\in[0,T]\\
u(x,T)=(x-p)^+&\text{ for } x \in\mathbb{R}^+
\end{array}\right.$$

where $p>0$ represents a given strike price , $\sigma >0$ is the volatility and $r>0$ is the interest rate. Let $v(x,t)=u(x,T-t)$. Then $v$ satisfies the forward Black-Scholes equation, defined for all time $t\in\mathbb{R}^+$ by
$$\left\{\begin{array}{ll}
\frac{\partial v}{\partial t}=\frac{1}{2}\sigma^2x^2\frac{\partial^2v}{\partial x^2}+rx\frac{\partial v}{\partial x}-rv & \text{ in }\mathbb{R}^+\times\mathbb{R}^{+}\\
v(0,T)=0& \text{ for } t\in\mathbb{R}^+\\
v(x,0)=f(x)&\text{ for } x \in\mathbb{R}^+
\end{array}\right.$$
 with
 $$f(x)=(x-p)^+=\left\{\begin{array}{ll}
x-p & \text{ if }x>p\\
0 & \text{ if }x\leq p.
\end{array}\right.$$

In order to  express this problem in an abstract form , we define $D_{\nu}=\nu x\frac{\partial}{\partial x}$, where $\nu=\frac{\sigma}{\sqrt{2}}$ and  $\mathcal{B}=(D_\nu)^2+\gamma(D_\nu)-rI$, with $\gamma=\frac{r}{\nu}-\nu$. Then the problem can be reformulated as:
$$\left\{\begin{array}{ll}
\frac{\partial v}{\partial t}=\mathcal{B}v, & \empty\\
v(0,T)=0, & \empty\\
v(x,0)=f(x)&\text{ for } x \in\mathbb{R}^+.
\end{array}\right.$$

 Recently \cite{goldstenmininniromanely}, gave a simple explicit representation of the solution of the Black-Scholes equation, and this representation holds in the spaces $Y^{s,\tau}$. Let
 $$
 Y^{s,\tau}=\{u\in C((0,\infty)) \ ; \ \lim_{x\rightarrow\infty}\frac{u(x)}{1+x^s}=0, \quad \lim_{x\rightarrow 0}\frac{u(x)}{1+x^{-\tau}}=0\}
 $$
 endowed with the norm
 $$
 ||u||_{Y^{s,\tau}}=\sup_{x>0}\biggl|\frac{u(x)}{(1+x^s)(1+x^{-\tau})}\biggr|.
 $$
 It is shown  that the $C_0$-semigroup solution of the Black-Scholes equation  can be represented by $T_t:=f(tD_\nu)$, where
 $$
 f(z)=e^{g(z)} \mbox{ with }  g(z)=z^2+\gamma z-r \mbox{ and }  D_{\nu}=\nu x\frac{\partial}{\partial x}.
 $$
For more information and details see \cite{blackscholes73}.

 In \cite{emiradgoldstein12},
  it is proved that the Black-Scholes semigroup is strongly continuous and chaotic for $s>1, \tau \geq 0$ with $s\nu>1$.
  We will see that, with a little more effort, the Black-Scholes semigroup satisfies the spectral criterion  in \cite{deschschappacher97}) under the same restrictions on the parameters and, therefore, the hypothesis of  Corollary~\ref{cor:dsw}.

Let $s>\frac{1}{\nu}$ , $0<\nu<1$ and $s>1$. Let $S_s=\{\lambda\in\mathbb{C} \ ; \ 0<Re\lambda<s\nu\}$. By Lemma 3.5 in \cite{emiradgoldstein12},  we have that $g(S_s)\cap i\mathbb{R}\neq\emptyset$. Then there exists an open ball $U\subset g(S_s)$ such that $U\cap i\mathbb{R}\neq \emptyset$ and such that $U\cap\mathbb{R}=\emptyset$. In particular, we find an inverse $g^{-1}$ well defined (and holomorphic) on $U$. We set $F=L\circ g^{-1}$, $F:U\rightarrow Y^{s,\tau}$, where $L:S_s\rightarrow Y^{s,\tau}$ is defined as $L(\lambda)=h_{\frac{\lambda}{\nu}}$, with $h_\lambda(x)=x^\lambda$. It is clear that $F$ is weakly holomorphic since $L$ is weakly holomorphic \cite{emiradgoldstein12}. Finally, $AF(\lambda )=g(\nu \frac{g^{-1}(\lambda)}{\nu})F(\lambda )=\lambda F(\lambda)$ for any $\lambda\in U$, where $(A,D(A))$ is the generator of the Black-Scholes semigroup, and the equality
$\langle F(\lambda ), \psi \rangle=0$ for a fixed $\psi\in (Y^{s,\tau})^*$ and for every $\lambda\in U$ necessarily implies $\psi=0$  \cite[Thm 3.6]{emiradgoldstein12}. Thus, the spectral criterion  in \cite{deschschappacher97} is satisfied and the Black-Scholes semigroup admits an invariant strong mixing Borel probability measure on $Y^{s,\tau}$ with full support by Corollary~\ref{cor:dsw}.
\end{example}

\end{document}